\newtheorem{theorem}{Theorem}
\theoremstyle{definition}
\newtheorem{proposition}{Proposition}[section]
\newtheorem{lemma}[proposition]{Lemma}
\theoremstyle{definition}
\theoremstyle{remark}
\newtheorem{remark}[proposition]{Remark}
\newcommand{\PP}{\mathbb{P}}
\newcommand{\pr}{\mathrm{pr}}
\newcommand{\Z}{\mathbb{Z}}
\newcommand{\C}{\mathbb{C}}
\newcommand{\N}{\mathbb{N}}
\newcommand{\GG}{\mathbb{G}}
\newcommand{\cO}{\mathcal O}
\newcommand{\Sp}{\mathrm{Spec}}
\newcommand{\Sing}{\mathrm{Sing}}
\newcommand{\Gr}{\mathrm{Gr}}
\newcommand{\gr}{\mathrm{gr}}
\newcommand{\sth}{\,\,|\,\,}
\newcommand{\longto}{\longrightarrow}
\renewcommand{\epsilon}{\varepsilon}
\renewcommand{\div}{\mathrm{div}}
\title[Russell's hypersurface from a geometric point of view]{Russell's hypersurface from a geometric point of view}
\author{Isac Hed\'en}
\address{Isac Hed\'en\\
 Mathematisches Institut\\
Universit\"at Basel\\
 Rheinsprung 21\\
  4051 Basel, Switzerland}
\email{Isac.Heden@unibas.ch}
\thanks{This work was done as part of PhD studies at the Department of Mathematics, Uppsala University; the support from the Swedish graduate school in Mathematics and Computing (FMB) is gratefully acknowledged. Many thanks also go to Karl-Heinz Fieseler for his supervision. Finally, I am thankful to the reviewer for many valuable comments and references, especially concerning the historical background of the problem.}
\date{\today}
\begin{document}

\begin{abstract}The famous Russell hypersurface is a smooth complex affine threefold which is diffeomorphic to a euclidean space but not algebraically isomorphic to the three dimensional affine space. This fact was first established by Makar-Limanov, using algebraic minded techniques. In this article, we give an elementary argument which adds a greater insight to the geometry behind the original proof and which also may be applicable in other situations.\end{abstract}

\subjclass[2010]{14R05, 14R20}

\maketitle
\section{Introduction}
Russell's hypersurface
$$
X:=\{(x,y,z,t) \in \C^4\sth x+x^2y+z^3+t^2=0\} \hookrightarrow \C^4,
$$ is one of the most prominent examples of an exotic variety, i.e. a variety which is diffeomorphic to an affine space (\cite{ChDi94}, \cite[Lemma 5.1]{Ka05}), but not isomorphic to it. The latter is an immediate consequence of Theorem~\ref{mlmthm} below, which states that there are not sufficiently many actions of the additive group $\GG_a$ on $X$, and the aim of this paper is to give an elementary argument for this theorem. It includes some important elements of the original proof, but gives a greater geometrical insight to the situation.\par

The study of exotic varieties goes back to a paper of Ramanujam \cite{Ra71}, where a nontrivial example of a topologically contractible smooth affine algebraic surface $S$ over $\C$ is constructed. Ramanujam observed that $S\times\C$ is diffeomorphic to $\C^3$, and asked whether this product is also isomorphic to $\C^3$. This was later proven not to be the case, and thus the algebraic structure on $\C^3$ coming from $S\times\C^2$ is exotic \cite{Zai93}. Later on, many other exotic structures on $\C^3$ have been constructed, see e.g. the introduction of \cite{Za98} for a list. Note also that there are no exotic structures on affine space in dimension $\leq 2$ \cite{Ra71}.\par

The motivation for studying Russell's hypersurface originally came from the linearization conjecture for $\C^3$, which claims that each $\GG_m$-action on $\C^3$ is linearizable. In the proof of this result by Koras and Russell, they described a list of smooth affine threefolds diffeomorphic to $\C^3$ which contains all the potential counterexamples to the conjecture, and thus it was reduced to determining whether all of these so called Koras-Russell threefolds are exotic \cite{KoRu97}. Kaliman and Makar-Limanov established exoticity for some of them \cite{KML97}, and Russell's hypersurface is the ''most simple'' among the remaining ones. The difficulty with Russell's hypersurface was that all the usual algebraic and geometric invariants failed to distinguish it from $\C^3$. Makar-Limanov finally established exoticity of Russell's hypersurface (Theorem~\ref{mlmthm}), and later on Kaliman and Makar-Limanov were able to prove exoticity of the remaining Koras-Russell threefolds \cite{KaML97,KaML07} as well, elaborating on Makar-Limanov's methods. This confirmed the linearization conjecture \cite{KKML97}.\par
From now on, we will focus on Makar-Limanov's result, stated in the following theorem.
\begin{theorem}[Makar-Limanov, \cite{LML96}]\label{mlmthm}
The projection $\pr_1:X \longrightarrow \C,\, (x,y,z,t) \mapsto x$ is invariant with respect to any $\GG_a$-action on $X$.
\end{theorem}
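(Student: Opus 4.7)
The plan is to exploit the asymmetry between the generic fibres and the unique special fibre of $\pr_1$. Over $c \neq 0$ the equation $x+x^2y+z^3+t^2=0$ solves uniquely for $y$, so the fibre is isomorphic to $\A^2_{z,t}$; whereas $X_0 := \pr_1^{-1}(0) \cong \A^1_y \times C$, with $C$ the cuspidal cubic $z^3+t^2=0$. In particular $X_0$ is singular along the line $\ell=\{(0,y,0,0)\}$. I would aim first to show that \emph{every} $\GG_a$-action on $X$ preserves $X_0$ set-theoretically, and then deduce invariance of $\pr_1$ by a short orbit argument.

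The orbit reduction is clean: suppose $X_0$ is preserved, yet some orbit $O=\GG_a\cdot p$ is not contained in a single fibre of $\pr_1$. Orbits of a $\GG_a$-action on an affine variety are closed copies of $\A^1$, so $\pr_1|_O$ is a non-constant polynomial map $\A^1\to\A^1$, hence surjective. Thus $O$ meets $X_0$; but an orbit meeting a $\GG_a$-invariant set lies entirely inside it, so $O\subset X_0$, contradicting surjectivity. Hence $\pr_1$ is constant on every orbit, i.e.\ $\GG_a$-invariant.

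The substantive step is therefore to produce a geometric invariant of $X$ that singles out $X_0$ among all effective divisors. My approach would be to build a smooth projective compactification $\overline{X}$ of $X$ with simple normal crossings boundary --- starting from the closure in $\p^4$ and resolving the singularities at infinity along the plane $\{w=z=0\}$ --- and to read off, from the dual graph of boundary components together with the way the closure of $X_0$ meets them, a combinatorial fingerprint of $X_0$. Any biregular automorphism of $X$ extends to a birational transformation of $\overline{X}$ which, since $\GG_a$ is connected, permutes boundary components trivially; so the closure $\overline{X_0}$ is constrained to be a divisor realizing that same fingerprint. Refinements could come from the log-Kodaira dimension of $X\setminus D$ or from the singularity of $D$ along a distinguished curve, as secondary invariants of a candidate divisor $D$.

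The main obstacle is precisely this last point: pinning down a \emph{canonical} geometric datum on $X$ whose only solution is the divisor $X_0$. A priori other divisors of $X$ might compactify in the same combinatorial way, and one must rule this out either by a direct analysis of the minimal SNC completion or by a rigidity argument for the special singularity type (a cuspidal cubic times a line, swept out by the line $\ell$ of singular points of the fibration).
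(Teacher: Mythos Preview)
Your orbit reduction is sound: once $X_0$ is known to be $\GG_a$-invariant, invariance of $\pr_1$ follows exactly as you argue. The gap lies in the step before. You propose to single out $X_0$ via the boundary combinatorics of a smooth projective SNC completion $\overline{X}$, but the argument does not go through as written. A $\GG_a$-action on $X$ extends only to a \emph{birational} self-map of $\overline{X}$, and birational maps need not permute boundary components: they may blow up centres on the boundary and contract other divisors, so connectedness of $\GG_a$ alone does not pin down an action on the set of components. To salvage this you would need either a biregular extension (i.e.\ a $\GG_a$-equivariant completion, which depends on the particular action and hence cannot be fixed once and for all) or a rigidity statement ruling out such modifications for your chosen $\overline{X}$; you supply neither. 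Even granting that, you still owe a characterisation of $X_0$ among all divisors with the same incidence pattern at infinity, a point you yourself flag as unresolved. As it stands the proposal is a correct reduction followed by a programme, not a proof.

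The paper follows an entirely different route, a geometric recasting of Makar-Limanov's original argument. It realises $X$ as the open set $U$ of a blowup $\pi:M\to\C^3$ with complement a divisor $D$, and filters $A=\cO(X)$ by pole order along $D$, so that $A_{\le 0}=\cO(M)\cong\C[x,z,t]$. The crux is that any nonzero locally nilpotent derivation $\partial$ on $A$ induces a homogeneous locally nilpotent derivation $\delta$ on the associated graded ring $B=\Gr(A)$ of strictly negative degree; hence $\partial(A_{\le 0})\subset A_{\le 0}$, the $\GG_a$-action descends along $\pi$ to $\C^3$, preserves $\C^*\times\C^2$, and therefore fixes the coordinate $x$. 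The negativity of the degree is proved by analysing the $\GG_m$-geometry of $W=\Sp(B)$: the fibre of $W\to\C^2$ over the Neil parabola splits as $F_-\cup F_+$, the sign of the degree decides which of $F_\pm$ is $\GG_a$-invariant, and a branched-double-cover argument combined with Zariski's main theorem excludes invariance of $F_-$. No compactification of $X$ or boundary dual graph enters.
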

Some years after Makar-Limanov proved Theorem~\ref{mlmthm}, Kaliman proved, using non-elementary birational geometry, that morphisms ${\C^3 \longrightarrow \C}$ with generic fiber $\C^2$ cannot have any other fibers \cite{Ka02}. Since all the fibers of $\pr_1:X\longrightarrow\C$ are $\C^2$ except the zero fiber $\pr_1^{-1}(0)$, it follows also from Kaliman's result that $X \not\cong \C^3$. In 2005, Makar-Limanov gave another proof of the exoticity of Russell's hypersurface \cite{MLM05}; yet another proof was given by Derksen \cite{Der97}, and Crachiola also proved the exoticity in the positive characteristic case \cite{Cra06}. The original proof of Theorem~\ref{mlmthm} used algebraic techniques, while we rather focus on a geometric approach using fibrations and quotient maps.

\par

\medskip

\noindent
 {\bf An outline of our proof}.
In order to prove Theorem~\ref{mlmthm}, we make use of an isomorphism $X \cong U \subset M$ with an open subset $U$ of a blowup 
$\pi : M \longrightarrow \C^3$, such that $D:=M \setminus U$
is the strict transform of $\{0\} \times \C^2 \hookrightarrow \C^3$ and
$$
X \cong U \subset M \longrightarrow \C^3
$$
is the map $(x,y,z,t) \mapsto (x,z,t)$. That is, $X$ is isomorphic to an affine modification $U$ of $\C^3$. The key-result is then that $\cO (M) \subset \cO(X)$ is invariant for any $\GG_a$-action on $X$. Since $\cO(M) \cong \cO (\C^3)$, this allows us to conclude that for any given $\GG_a$-action on $X$, there is an induced $\GG_a$-action on $\C^3$ which makes $\pi|_X:X \longrightarrow \C^3$ equivariant. Then $\pi(U)$ is obviously invariant, and it follows that its interior $\C^*\times\C^2$ is invariant as well. Theorem~\ref{mlmthm} is obtained from this by observing that any $\GG_a$-action on $\C^* \times \C^2$ leaves the first coordinate invariant: a nontrivial $\GG_a$-orbit is isomorphic to $\C$, but there are no non-constant morphisms from $\C$ to $\C^*$.

\section{Russell's hypersurface in a blowup of $\C^3$}
We recall the realization of Russell's hypersurface as an affine modification of $\C^3$, see also \cite[Example 1.5]{KaZa99}. Let $N \hookrightarrow \C^2 = \Sp (\C[z,t])$ denote the affine cuspidal cubic curve given by
$$
N:= \{ (z,t) \in \C^2\sth z^3+t^2=0 \},
$$ and
let $I:=(g,h)\subset\C[x,z,t]$ denote the ideal which is generated by the two relatively prime polynomials 
$g(x,z,t)=x^2$ and $h(x,z,t)=x+z^3+t^2$. The zero set of $I$ is $\{0\}\times N$, and the blowup 
$$
M:=Bl_I({\C^3}) \cong \{ ((x,z,t), [u:v]) \in \C^3 \times \PP^1\sth h(x,z,t)u+g(x,z,t)v=0 \}
$$
of $\C^3$ along $I$
is a hypersurface in $\C^3 \times \PP^1$ with singular locus of codimension two:
$
\Sing(M)=\{0\} \times N \times \{[0:1]\}.$ In particular, $M$ is a normal variety. 

\begin{remark}
Russell's hypersurface is isomorphic to the open subset $U$ of $M$ given by $u\neq 0$, via the embedding $X\hookrightarrow M,\,\,(x,y,z,t)\mapsto ((x,z,t),[1:y])$. 
\end{remark}
We denote the complement of $U$ in $M$ by $D$. Note that $D$ is then given by $u=0$, and that the image of $U$ under the blowup morphism is $\pi (U) = \C^* \times \C^2 \cup ( \{0\} \times N).$
\medskip

\section{Additive group actions on Russell's hypersurface}
In order to see that $\cO(M) \subset A:=\cO(X)$ is invariant for every $\GG_a$-action on $X$, we show the equivalent fact that $\cO(M) \subset A$ is stable under every locally nilpotent derivation $\partial: A \longrightarrow A$. This obviously holds for the trivial $\GG_a$-action on $X$, so we may assume that $\partial \not=0$. The first step is to characterize $\cO(M)$ in terms of a filtration on $A$. 

\begin{remark}
With the filtration
$$A_{\le n}:=  \cO_{nD}(M)= \{ f \in \C(M)^*\sth \div(f) \ge - nD \}\cup\{0\}$$ we have $A_{\le 0}=\cO (M)=\pi^*(\cO(\C^3))$, so $\pi^*(\cO(\C^3))$ is stable with respect to a locally nilpotent derivation $\partial:A\longrightarrow A$ if and only if $\partial(A_{\leq 0})\subseteq A_{\leq 0}$. 
\end{remark}

In order to understand the above filtration, we treat $A$ as a subset of $\C(x,z,t)$ and note that multiplicities along $D$ are simply multiplicities along $\{0\} \times \C^2$; so $x,y,z$ and $t$ have multiplicities $1,-2,0$ and $0$, respectively.
Now
every element $f\in A \setminus \{0\}$ can be written in the form 
$$
f=\sum_{i=0}^k y^ip_i(x,z,t),
$$
 where each $p_i$ is at most linear in $x$ for $i\geq 1$ (and $p_k\neq 0$).
Thus
$$
A=\bigoplus_{k=-\infty}^\infty A_k
$$ 
is a direct sum of free $\C[z,t]$-modules $A_k$ of rank 1 defined as
\begin{equation}\label{aenn}
 A_k:=\left\{
\begin{array}{lcl}
 \C[z,t]x^{|k|}&,&\textit{ if } k\leq 0\\
 \C[z,t]y^\ell&,&\textit{ if } k=2\ell>0\\
 \C[z,t]xy^\ell&,&\textit{ if } k=2\ell-1>0
\end{array}\right. ,
\end{equation}
and one can check that 
$$
A_{\leq n}=\bigoplus_{k\leq n} A_k.
$$ 
We thus obtain an explicit description of the associated graded algebra 
$$
B:=\Gr(A) = \bigoplus_{n\in\Z}B_n\ \ \text{with}\ B_n:=A_{\leq n}\slash A_{\leq n-1}.
$$ 
It is generated by the elements $\gr (x) \in B_{-1}, \gr (y) \in B_2,\gr (z),\gr(t) \in B_0$ and
$$
W:=\Sp(B)\cong \{(x,y,z,t)\in\C^4\sth x^2y+z^3+t^2=0\}.
$$
In particular $B_0=\C[\gr(z),\gr(t)]\simeq\C[z,t]$.
\begin{remark}
This grading was also used by M. Zaidenberg, see \cite[Lemma 7.4]{Za98}
\end{remark}

Let $\ell=\ell(\partial)\in\Z$ be minimal with the property that $\partial(A_{\le n}) \subset A_{\le n+\ell}$ for all $n\in\Z$; the existance of such an $\ell$ follows from the fact that both $A_{\le 0}$ and $B$ are finitely generated graded algebras, and $\ell\neq -\infty$ since we consider a nontrivial $\GG_a$-action. It follows that $\partial:A\longrightarrow A$ induces a nontrivial homogeneous locally nilpotent derivation on $B$ of degree $\ell$; we will denote it by $\delta$. With this notation it is enough to show that $\ell\leq 0$ in order to obtain $\partial(A_{\geq 0})\subseteq A_{\geq 0}$. In fact, more is true:

\begin{proposition}\label{keyprop}
\label{homograd} With $B$ as above, any nontrivial locally nilpotent homogeneous derivation 
$\delta : B \longrightarrow B$ has degree $\ell < 0$.
\end{proposition}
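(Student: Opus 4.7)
The plan is to rule out $\ell \ge 0$ by contradiction, treating $\ell = 0$ and $\ell > 0$ separately. The key ingredients are the defining relation $x^2 y = -(z^3+t^2)$ in $B$ together with the explicit description of each graded piece given in \eqref{aenn}.

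First I would write $\delta$ on the generators. For $\ell = 2m \ge 0$ (the odd case is analogous), \eqref{aenn} forces
$$
\delta(z) = y^m U,\ \ \delta(t) = y^m V,\ \ \delta(x) = xy^m a,\ \ \delta(y) = y^{m+1} b
$$
for some $U, V, a, b \in \C[z,t]$ (when $\ell = 0$ the factors $y^m$ are trivial). Applying $\delta$ to $x^2 y + z^3 + t^2 = 0$ and using $x^2 y = -(z^3+t^2)$ to clear the $y^{m+1}$-terms yields the key polynomial identity in $\C[z,t]$:
$$
3z^2 U + 2tV = (z^3+t^2)(2a+b). \qquad (\star)
$$

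When $\ell = 0$, the restriction $u := U\partial_z + V\partial_t$ is a locally nilpotent derivation of $\C[z,t]$, and $(\star)$ reads $u(z^3+t^2) = (z^3+t^2)(2a+b)$, so $z^3+t^2$ divides $u(z^3+t^2)$. By Rentschler's theorem, any nonzero LND of $\C[z,t]$ is conjugate to $p(\tilde z)\partial_{\tilde t}$ with kernel $\C[\tilde z]$; comparing $\tilde t$-degrees in this divisibility forces $z^3+t^2 \in \C[\tilde z]$, which would present the Neil parabola as a finite union of smooth lines, contradicting its irreducibility and cuspidality. Hence $u = 0$, so $U = V = 0$ and $2a+b = 0$. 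Iterating then gives $\delta^k(x) = a^k x$, forcing $a = 0$ and so $\delta = 0$.

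When $\ell > 0$, the iterates $\delta^k(z) = y^{km} P_k$ obey the recursion $P_{k+1} = (kmb + u)P_k$ with $P_1 = U$, and local nilpotence demands $P_K = 0$ for some $K$. The identity $(\star)$ permits the decomposition $u = \tfrac{1}{6}(2a+b)E + h\,\partial_0$, where $E := 2z\partial_z + 3t\partial_t$ and $\partial_0 := -2t\partial_z + 3z^2\partial_t$ satisfy $E(z^3+t^2) = 6(z^3+t^2)$ and $\partial_0(z^3+t^2) = 0$. The main obstacle is to show that this recursion cannot terminate except in the degenerate sub-case $U = V = 0$: the natural tool is the weighted grading $\deg z = 2, \deg t = 3$, under which $E$ preserves weight and $\partial_0$ raises weight by one, allowing one to track the top weighted-homogeneous component of $P_k$ to obstruct termination. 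Once $U = V = 0$ is established, a direct computation gives $\delta^k(x) = c_k a^k x y^{km}$ with $c_k = \prod_{j=1}^{k-1}(1 - 2jm)$ --- never zero for $m \ge 1$ --- so $a = 0$ and $\delta = 0$.
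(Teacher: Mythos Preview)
Your $\ell = 0$ argument via Rentschler is correct and clean. The $\ell > 0$ case, however, has a real gap: you name ``the main obstacle'' (showing the recursion $P_{k+1} = (kmb+u)P_k$ cannot terminate unless $U=V=0$) and propose weight-tracking as ``the natural tool,'' but you do not carry it out, and it is not clear that comparing top weighted-homogeneous components suffices --- the polynomials $a,b,h\in\C[z,t]$ carry no a priori weight constraint, so nothing prevents cancellation at the top. The odd case, dismissed as ``analogous,'' is in fact structurally different: there the relation analogous to $(\star)$ becomes $3z^2U'+2tV' = -2a'$ rather than a multiple of $z^3+t^2$, so the divisibility feeding into Rentschler is unavailable.

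The even $\ell>0$ gap closes much more cheaply than your sketch suggests once you notice that your formulas give $y\mid\delta(y)$ in the domain $B$; the standard LND identity $\deg_\delta(fg)=\deg_\delta f+\deg_\delta g$ then forces $\delta(y)=0$, i.e.\ $b=0$. The recursion collapses to $P_{k+1}=u(P_k)$, so $u$ is itself an LND of $\C[z,t]$ with $u(z^3+t^2)=2a(z^3+t^2)$, and your $\ell=0$ argument applies verbatim. The odd case still requires a genuinely separate treatment. By contrast, the paper's proof is geometric and avoids the even/odd split entirely: Lemma~\ref{1or2} rules out $\ell=0$ and shows that $\ell>0$ would force the level set $\{y=1\}\cong\{x^2+z^3+t^2=0\}$ to be $\GG_a$-invariant; the contradiction is then extracted from the branched double cover $\{x^2+z^3+t^2=0\}\to\C^2$, whose branch locus is a Neil parabola and cannot map birationally onto the smooth quotient curve $V//\GG_a$.
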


Before going into the proof, let us start with a discussion of the geometry of $W \hookrightarrow \C^4$ and prove Lemma~\ref{1or2} below.
As a hypersurface in $\C^4$, $W$ is a normal variety since its singular set $\Sing(W)= \{0\} \times \C \times \{0\} \times \{0\}$ has codimension two. It admits two different group actions: 
the $\GG_a$-action $(\tau, w) \mapsto \tau .w$ corresponding to the locally nilpotent derivation $\delta:B\longto B$; and the $\GG_m$-action corresponding to the grading of $B$. The latter is given by
$$\GG_m\times W\longto W,\quad(\lambda,(x,y,z,t)) \mapsto (\lambda^{-1} x,\lambda^2 y,z,t),$$ and since $B_0 = \C[z,t]$, the $\GG_m$-quotient morphism is given by $$p:W \rightarrow \C^2\cong\Sp(\C[z,t]),\quad (x,y,z,t)\mapsto (z,t).$$
It is trivial above $\C^2\setminus N$: the map
$$
(\C^2\setminus N)\times\GG_m \stackrel\sim\longto p^{-1}(\C^2\setminus N),\quad
((z,t),\lambda) \mapsto \left(\lambda^{-1}, -(z^3+t^2)\lambda^2, z, t\right),
$$
is a $\GG_m$-equivariant isomorphism with inverse
$$
p^{-1}(\C^2\setminus N) \stackrel\sim\longto (\C^2\setminus N)\times\GG_m,\quad
(x,y,z,t) \mapsto ((z,t),x^{-1}).
$$
 As for $N$, we have 
 $p^{-1}(N)=F_-\cup F_+,$ where $F_-$ and $F_+$ are the subsets of $p^{-1}(N)$ given by $y=0$ and $x=0$ respectively.
\begin{remark}
The set $F_-$ consists exactly of the points $w\in W$ for which $\lim_{\lambda\to\infty}\lambda w$ exists, and $F_+$ consists exactly of the points $w\in W$ for which $\lim_{\lambda\to 0}\lambda w$ exists.
\end{remark}
\begin{remark}\label{trivoutsideN}
The above trivialization extends to a trivialization $\C^2\times\GG_m\stackrel\sim\longrightarrow W \setminus F_+,$ but for $W \setminus F_-$ there is no such trivialization since the $\GG_m$-isotropy group of a point in $F_+ \setminus F_-$ has order 2.
\end{remark}

Now let us turn to the $\GG_a$-action $\GG_a \times W \longrightarrow W, (\tau, w) \mapsto \tau .w$, corresponding to $\delta: B \longrightarrow B$. Since $\delta$ is homogeneous of degree $\ell$, it is normalized by the $\GG_m$-action, i.e. for $w\in W$, $\tau \in \GG_a$ and $\lambda \in \GG_m$, we have 
$$(\lambda^{-\ell}\tau).(\lambda w)=\lambda(\tau.w).$$

In particular this implies that $\lambda O$ is a $\GG_a$-orbit for any $\GG_a$-orbit $O$.

\begin{lemma}\label{1or2}
Let $\delta: B \longrightarrow B$ be a nontrivial locally nilpotent derivation, homogeneous of degree $\ell$. Then either
\begin{enumerate}
\item $\ell < 0$ and $F_+$ is invariant, or
\item $\ell > 0$ and $F_-$ is invariant.
\end{enumerate}
\end{lemma}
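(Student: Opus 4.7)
The plan splits into invariance and non-invariance. The invariance of $F_+$ for $\ell<0$ (and $F_-$ for $\ell>0$) follows from the limit characterization in the remark together with the normalization relation. For non-invariance I argue by contradiction using a classification of locally nilpotent derivations on the singular surface $F_-$ (or $F_+$), combined with an iterated descent via the Krull intersection theorem.

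For invariance: given $w\in F_+$ and $\ell<0$, so that $\lim_{\lambda\to 0}\lambda w$ exists by the remark, the normalization relation $\lambda(\tau * w)=(\lambda^{-\ell}\tau)*(\lambda w)$ rewrites $\lim_{\lambda\to 0}\lambda(\tau * w)$ as $\lim_{\lambda\to 0}(\lambda^{-\ell}\tau)*(\lambda w)$. Since $\lambda^{-\ell}=\lambda^{|\ell|}\to 0$, this limit exists by continuity of the $\GG_a$-action, so $\tau * w\in F_+$. The case of $F_-$ for $\ell>0$ is symmetric with $\lambda\to\infty$.

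For non-invariance, suppose $\ell<0$ and $F_-$ is $\GG_a$-invariant, for contradiction. Then $\delta$ descends to a locally nilpotent derivation $\bar\delta$ on $\cO(F_-)=B/(y)\cong\C[x]\otimes\cO(N)$, where $\cO(N)=\C[z,t]/(z^3+t^2)$ is the coordinate ring of the cuspidal cubic $N$. The main step is to classify LNDs on $\cO(F_-)$ as $g(z,t)\partial_x$ with $g\in\cO(N)$, by lifting to the normalization $\C[x,s]$ via $z=-s^2$, $t=s^3$: the compatibility $3z^2\bar\delta(z)+2t\bar\delta(t)=0$ in $\cO(F_-)$ implies $\bar\delta(z)$ vanishes along $\{z=t=0\}$, so $\widetilde{D}(s):=-\bar\delta(z)/(2s)$ in fact lies in $\C[x,s]$; the classical fact that an LND on a domain cannot have $\widetilde{D}(s)=qs$ with $q\ne 0$ forces $\widetilde{D}(s)=0$, and descent to $\C[x,s^2,s^3]$ yields $\widetilde{D}=f\partial_x$ with $f\in\C[s^2,s^3]=\cO(N)$. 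Matching this form against the degree structure of $\delta$ -- namely $\delta(x)=p(z,t)x^{|\ell|+1}$ and $\delta(z),\delta(t)\in\C[z,t]x^{|\ell|}$, reducing mod $(y)$ to the same expressions in $\cO(F_-)$, and using the direct-sum decomposition $\cO(F_-)=\bigoplus_{k\ge 0}x^k\cO(N)$ -- forces $\bar\delta=0$, i.e., $\delta(B)\subset(y)$.

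By the classical fact that $\delta=yD$ is an LND only if $D(y)=0$, we deduce $\delta(y)=0$, and $D$ is an LND of degree $\ell-2<0$ to which the same argument applies (since $D(y)=0$ makes $F_-$ automatically $D$-invariant); this gives $D(B)\subset(y)$, hence $D=yD_1$ with $D_1$ again an LND. Iterating, $\delta\in y^nB$ for every $n\ge 0$; since $B$ is a Noetherian domain and $(y)$ is a proper ideal, the Krull intersection theorem gives $\delta=0$, contradicting nontriviality. The case $\ell>0$ with $F_+$ is handled symmetrically via the analogous classification on $F_+=V(x)$. The main obstacle is the classification step on the singular surface $F_-$; once it is in hand, the iterated descent makes the rest routine.
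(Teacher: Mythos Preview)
Your argument is essentially correct but leaves the case $\ell=0$ unaddressed: the lemma's dichotomy asserts in particular that $\ell\ne 0$, and you only treat $\ell<0$ and $\ell>0$. This is easily repaired with your own tools: for $\ell=0$ the limit argument (now with $\lambda^{-\ell}\tau=\tau$ constant) shows both $F_\pm$ are invariant, and your descent argument on $F_-$ still applies (the grading gives $\bar\delta(x)\in x\,\cO(N)$ while the classification forces $\bar\delta(x)\in\cO(N)$, hence $\bar\delta=0$; then $\delta=yD$ with $D$ of degree $-2<0$, and the iteration proceeds). You should state this case explicitly. A second minor point: when you write ``an LND cannot have $\widetilde D(s)=qs$ with $q\ne 0$'', what you actually use (and what holds) is the general fact that $s\mid\widetilde D(s)$ forces $\widetilde D(s)=0$ for any LND on a domain; your divisibility $s\mid\widetilde D(s)$ comes from $\bar\delta(z)\in\cO(F_-)=\C[x,s^2,s^3]$ having no $s^0$ or $s^1$ term.

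Your route differs substantially from the paper's. For invariance, the paper produces a nonzero homogeneous invariant $f\in B_n^\delta$ of the appropriate sign and observes that $F_\pm\subset V(f)$; your direct use of the normalization relation and continuity is arguably cleaner. For the exclusion of both $F_\pm$ being simultaneously invariant (which yields both $\ell\ne 0$ and the non-invariance clauses), the paper argues geometrically: if both were invariant, any nontrivial orbit $O\cong\C$ in the dense open set $W\setminus p^{-1}(N)$ would map under $p$ into a smooth affine elliptic curve $z^3+t^2=a$ with $a\ne 0$, hence constantly, forcing $O$ into a $p$-fibre $\cong\GG_m$ --- impossible. Your approach is purely algebraic: classify LNDs on the singular surface $F_-$ via its normalization, match against the grading to get $\bar\delta=0$, then descend iteratively using Krull. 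The paper's argument is short and conceptual but leans on the positive genus of $z^3+t^2=a$; yours is more hands-on but self-contained, and in fact overlaps with the method the paper uses later (in the Proposition) to exclude invariance of $F_-$ by a different surface argument.
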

\begin{proof}

 Since the locally nilpotent derivation $\delta:B\rightarrow B$ is homogeneous, its kernel 
$$
B^\delta:=\{f\in B\sth \delta(f)=0\}=\{f\in B\sth f(\tau.w)=f(w)\,\,\forall \tau\in \GG_a\}
$$
 is a graded subalgebra, i.e.:
$$
B^\delta=\bigoplus_{n\in\Z}B_n^\delta.
$$ 
Given $f \in B_k \setminus \{0\}$, we have $\delta^\nu f \in B_{k+\nu \ell}^\delta \setminus \{0\}$ for a 
suitable $\nu \in \N$. It follows that
\begin{enumerate}
\item
if $\ell=0$, we have $B_n^\delta \not=\{0\}$ for all $n\not=0$, 
\item
if $\ell > 0$ we have $B_n^\delta \not=\{0\}$ for some $n >0$,
\item
if $\ell <0$ we have $B_n^\delta \not=\{0\}$ for some  $n<0$.
\end{enumerate}

First assume that $\ell>0$, so that $B_n^\delta\neq 0$ for some $n$, and let $f\in B_n^\delta \setminus \{0\}$. Then $f$ vanishes on $F_-$ since $f(\lambda x)= \lambda^n f(x)$ and since $\lim_{\lambda \to \infty}\lambda x$ exists in $W$ for $x \in F_-$. It follows that $F_-$ is invariant since it is an irreducible component of the invariant set $V(f)\subset W$ of dimension two. If $\ell<0,$ it follows analogously that $F_+\subset W$ is invariant. It remains to show that $\ell$ cannot be zero.\par

If $\ell=0$, both $F_-$ and $F_+$ are invariant. So $p^{-1}(N)$ is invariant and $W\setminus p^{-1}(N)$ as well. Then for any nontrivial $\GG_a$-orbit $O\subset W\setminus p^{-1}(N)$  the map $(z^3+t^2)\circ p|_O$ has no zeros, and thus must be constant, say with value $a\in\C^*$, since $O \cong \C$. However, any morphism $p|_O:O\rightarrow V(\C^2;z^3+t^2-a)$ from the complex line to the smooth affine elliptic curve $V(\C^2;z^3+t^2-a)$ is constant, so $O$ is contained in a $p$-fiber. Since $p(O)\in\C^2\setminus N$, this $p$-fiber is isomorphic to $\GG_m$, as $p$ is a $\GG_m$-principal bundle over $\C^2\setminus N$. This gives a contradiction since $\C$ cannot be embedded into $\GG_m$.\end{proof}

\begin{proof}[Proof of Proposition~$\ref{keyprop}$]
By Lemma~\ref{1or2} it is enough to show show that $F_-,$ given by $y=0$, is not invariant. Suppose to the contrary that $F_-$ is invariant; then its complement in $W$, given by $y\neq 0$, is invariant as well. Since there is no non-constant invertible function on $\GG_a$-orbits, all $\GG_a$-orbits in $W\setminus F_-$ are contained in level hypersurfaces of $y$. In particular the hypersurface $V\subset W$ which is given by $y=1$ is invariant and we have $V\simeq\{(x,y,z)\in\C^3\sth x^2+z^3+t^2=0\}.$
The restriction of the $\GG_m$-quotient projection  
$$
\psi:=p|_V:V\rightarrow \C^2,\quad(x,z,t)\mapsto (z,t)
$$ 
is a two sheeted branched covering of $\C^2$ with
branch locus $\psi^{-1}(N)\cong N$ and deck transformation 
$$
\sigma:V\rightarrow V,\quad (x,z,t)\mapsto(-x,z,t),
$$ which is simply the action of $-1\in\GG_m$. In particular $\sigma (O)$ 
is a $\GG_a$-orbit for any $\GG_a$-orbit $O$. Assume for the moment that every nontrivial $\GG_a$-orbit intersects $\psi^{-1}(N)$ exactly once. Since the hypersurface $V$ is a normal surface, there is a quotient map 
$$
\chi:V\longto V//\GG_a:=\Sp (\cO (V)^{\GG_a}),
$$ 
the generic fiber of which is a $\GG_a$-orbit \cite[Lemma 1.1]{Fie94}. Thus the restriction 
$$
\chi|_{\psi^{-1}(N)}:\psi^{-1}(N)\longto V//\GG_a
$$ 
is injective on a nonempty open subset of $\psi^{-1}(N)$. Hence, $V//\GG_a$ being a smooth curve, it follows from Zariski's main theorem that this restriction is an open embedding. However, this is a contradiction since the affine cuspidal cubic curve $\psi^{-1}(N)$ has a singular point.

Finally, any nontrivial $\GG_a$-orbit $O$ intersects $\psi^{-1}(N)$: otherwise $\psi|_O$ would be a non-constant morphism to an affine elliptic curve $z^3+t^2=a$ for some $a\in \C^*$, which is impossible. Note that a point in $O\cap\psi^{-1}(N)$ is a common point of the two $\GG_a$-orbits $\sigma(O)$ and $O$, so $\sigma(O)=O$. Choose an equivariant isomorphism $\C\cong O$ such that $0\in\C$ corresponds to a point in $\psi^{-1}(N)$. Then the involution $\sigma:O\longto O$ corresponds to $\C\longto\C,\zeta\mapsto -\zeta$, and as a consequence every nontrivial $\GG_a$-orbit $O\hookrightarrow V$ intersects the branch locus $\psi^{-1}(N)=W^\sigma$ (the fixed point set of $\sigma$) in exactly one point.
\end{proof}


\begin{thebibliography}{PaRoVu2001}

\bibitem[1]{ChDi94}
\textsc{A. D. R. Choudary, A. Dimca:}
\textit{Complex hypersurfaces diffeomorphic to affine spaces,}
Kodai Math. J. 17 (1994), 171--178.

\bibitem[2]{Cra06}
\textsc{A. J. Crachiola:}
\textit{The hypersurface $x+x^2y+z^2+t^3=0$ over a field of arbitrary characteristic,}
Proc. Amer. Math. Soc. 134 (2006), 1289--1298.

\bibitem[3]{Der97}
\textsc{H. Derksen:}
\textit{Constructive Invariant Theory and the Linearization Problem,}
Ph.D. thesis, University of Basel, 1997.

\bibitem[4]{Fie94}
\textsc{K.-H. Fieseler:}
\textit{On complex affine surfaces with $\C_+$-actions,}
Comment. Math. Helvetici 69 (1994), 5--27.

\bibitem[5]{Ka02}
\textsc{S. Kaliman:}
\textit{Polynomials with general {$\mathbb C^2$}-fibers are variables,}
Pacific Journal of Mathematics, volume 203, 2002, pages 161--190.

\bibitem[6]{Ka05}
\textsc{S. Kaliman:}
\textit{Actions on $\C^*$ and $\C_+$ on affine algebraic varieties,}
Pure Math., vol. 80, Amer. Math. Soc., Providence, RI, 2009, pp. 629--654. MR 2483949 (2010h:14099)

\bibitem[7]{KKML97}
\textsc{S. Kaliman, M. Koras, L. Makar-Limanov, P. Russell:}
\textit{$\C^*$-actions on $\C^3$ are linearizable.}
Electron. Res. Announc. Amer. Math. Soc. 3 (1997), 63--71.

\bibitem[8]{KaML97}
\textsc{S. Kaliman, L. Makar-Limanov:}
\textit{On the Russell-Koras contractible threefolds,}
J. Algebraic Geom. 6 (1997), 247--268.

\bibitem[9]{KaML07}
\textsc{S. Kaliman, L. Makar-Limanov:}
\textit{AK-invariant of affine domains,}
Affine algebraic geometry, 231--255, Osaka Univ. Press, Osaka, 2007.

\bibitem[10]{KML97}
\textsc{S. Kaliman, L. Makar-Limanov:}
\textit{Affine algebraic manifolds without dominant morphisms from Euclidean spaces,}
Rocky Mountain J. Math. 27 (1997), 601--610.

\bibitem[11]{KaZa99}
\textsc{S. Kaliman, M. Zaidenberg:}
\textit{Affine modifications and affine hypersurfaces with a very transitive automorphism group,}
Transform. groups 4 (1999), 53--95.

\bibitem[12]{KoRu97}
\textsc{M. Koras and P. Russell:}
\textit{On linearizing ``good'' {${\mathbb C}^*$}-actions on {${\mathbb C}^3$},}
Group actions and invariant theory ({M}ontreal, {PQ}, 1988), Volume 10 of CMS Conf. Proc., pages 93--102. Amer. Math. Soc., Providence, RI, 1989.



\bibitem[13]{KoRu97}
\textsc{M. Koras, P. Russell:}
\textit{Contractible threefolds and $\C^*$-actions on $\C^3$,}
J. Algebraic Geom. 6 (1997), 671--695.





\bibitem[14]{LML96}
\textsc{L. Makar-Limanov:}
\textit{On the hypersurface {$x+x^2y+z^2+t^3=0$} in {${\mathbb C}^4$} or a {${\mathbb C}^3$}-like threefold which is not {${\mathbb C}^3$},}
Israel Journal of Mathematics, 1996 (part B), pages 419--429.

\bibitem[15]{MLM05}
\textsc{L. Makar-Limanov:}
\textit{Again $x+x^2y+z^2+t^3=0$,}
Affine algebraic geometry, 177--182, Contemp. Math.,369, Amer. Math. Soc., Providence, RI, 2005.

\bibitem[16]{Ra71}
\textsc{C. P. Ramanujam:}
\textit{A topological characterisation of the affine plane as an algebraic variety,}
Ann. of Math. (2) 94 (1971), 69--88.

\bibitem[17]{Zai93}
\textsc{M. Zaidenberg:}
\textit{An analytic cancellation theorem and exotic algebraic structures on $\C^n,\,n\geq 3$,}
Colloque d'analyse Complexe et G\'eom\'etrie (Marseille, 1992). Ast\'erisque 217 (1993), 251--282.

\bibitem[18]{Za98}
\textsc{M. Zaidenberg:}
\textit{Lectures on Exotic Algebraic Structures on Affine Spaces,}
St. Petersburg Math. J. 11 (2000), 703--760.

\end{thebibliography}
\end{document}